\newtheorem{thm}{Theorem}
\newtheorem{lem}[thm]{Lemma}
\theoremstyle{definition}
\newtheorem{REM}{Remark}
\begin{document}
\title{Enumerating (multiplex) juggling sequences}
\author{Steve Butler\thanks{{\tt sbutler@math.ucsd.edu}, Department of Mathematics, University of California, San Diego}\and Ron Graham\thanks{{\tt rgraham@cs.ucsd.edu}, Department of Computer Science \& Engineering, University of California, San Diego}}
\date{\empty}
\maketitle
  
\begin{abstract}
We consider the problem of enumerating periodic $\sigma$-juggling
sequences  of length $n$ for multiplex juggling, where $\sigma$ is
the initial state (or {\em landing schedule}) of the balls.  We
first show that this problem is equivalent to choosing $1$'s in a
specified matrix to guarantee certain column and row sums, and
then using this matrix, derive a recursion.  This work is a
generalization of earlier work of Fan Chung and Ron Graham.
\end{abstract}

\section{Introduction}
Starting about 20 years ago, there has been increasing activity by
discrete mathematicians and (mathematically inclined) jugglers in
developing and exploring ways of representing various possible
juggling patterns numerically (e.g., see \cite{BEGW,bg1,bg2,cg,cg1,er, polster,stadler,war}). Perhaps the most prominent of these
is the idea of a {\it juggling sequence}\/ (or ``siteswap'', as it
is often referred to in the juggling literature). The idea behind
this approach is the following. For a given sequence $T = (t_1,
t_2, \ldots, t_n)$ of nonnegative integers, we associate a
(possible) periodic juggling pattern in which at time $i$, a ball
is thrown so that it comes down at time $i + t_i$. This is to be
true for each $i, 1 \leq i \leq n$. Because we assume
this is to be repeated indefinitely with period $n$, then in
general, for each $i$ and each $k \geq 0$, a ball thrown at
time $i + kn$ will come down at time $i + t_i + kn$. The usual
assumption made for a sequence $T$ to be a valid juggling sequence
is that at no time do two balls come down at the same time. This
assumption results in many consequences, e.g., all of the
quantities $i + t_i \pmod n$ must be distinct, the number of balls
in the pattern is the average $({1}/{n}) \sum_{k=1}^{n} t_k$, and
the number of juggling sequences with period $n$ having fewer
than $b$ balls is $b^n$ (see \cite{BEGW}).

An important object for understanding the relationships and
transitions between various juggling sequences is the concept of a
{\it state diagram}, developed independently (and almost 
simultaneously) by Jack Boyce and Allen Knutson
\cite{jis}. This is a directed graph where each vertex is called a
{\it state}\/ or {\it landing schedule}, a $0$-$1$ vector indicating
when the balls that are currently in the air will land, and edges 
represent possible transitions between states.  The vertex and edge
sets for the state diagram can be defined as follows:
\begin{eqnarray*}
V&=&\{\langle a_1,a_2,a_3,\ldots\rangle: a_i\in \{0,1\},
{\textstyle \sum_{i}}a_i=b\},\\
E&=&\{\langle a_1,a_2,a_3,\ldots\rangle{\to}\langle b_1,b_2,b_3,\ldots\rangle :
a_i\leq b_{i-1}\mbox{ for }i=2,3,\ldots\}.
\end{eqnarray*}

More specifically, each juggling sequence $T$ is associated with a state $\sigma=
\sigma_T= \langle \sigma_1, \sigma_2, \ldots, \sigma_h, \ldots
\rangle$ which can be found by imagining that the sequence has
been executed infinitely often in the past, with a final throw
$t_n$ being made at time $0$. Then $\sigma_i$ is $1$ if and only
if there is some ball still in the air at time $0$ that will
land at time $i$.  In this case we
say that $T$ is a $\sigma$-juggling sequence.

If we are now going to throw one more ball at time $1$,
transitioning to a (possibly) new juggling state $\sigma '$, then
we are restricted to throwing it so that it lands at some time $j$
which has $\sigma_j = 0$. The new state $\sigma ' = \langle
\sigma_1 ', \sigma_2 ', \ldots \rangle$ then has $\sigma_k ' =
\sigma_{k+1}$ for $k \geq 1, k \neq j-1$ and $\sigma_{j-1} ' = 1$.
The preceding remarks assume that $\sigma_1 = 1$. If $\sigma_1 =
0$, so that there is no ball available to be thrown at time $1$,
then a ``no-throw'' occurs, and the new state vector $\sigma '$
satisfies $\sigma_k ' = \sigma_{k+1}$ for all $k \geq 1$. These
give the two basic transitions that can occur in the state diagram.
With this interpretation, it is easy to see that a juggling sequence
of period $n$ exactly corresponds to a walk of length $n$ in the state
diagram.

In \cite{cg1}, the problem of enumerating $\sigma$-juggling
sequences of period $n$ was studied, which by the above comments
is equivalent to counting the number of directed closed walks of
length $n$ starting at $\sigma$ in the state diagram. In the same
paper, the related problem of counting the number of ``primitive''
closed walks of length $n$ was also solved. These are walks in
which the starting state $\sigma$ is visited only at the beginning
and the end of the walk.

A particular unsolved problem mentioned in \cite{cg1} was that of
extending the analysis to the much more complex situation of {\it
multiplex}\/ juggling sequences. In a multiplex juggling sequence,
for a given parameter $m$, at each time instance up to $m$ balls
can be thrown and caught at the same time, where the balls thrown
at each time can have different landing times. Thus, ordinary
juggling sequences correspond to the case $m = 1$.

As before, we can describe a (multiplex) juggling sequence as a
walk in a state diagram. Here a state $\alpha=\langle
a_1,a_2,a_3,\ldots\rangle$ can again be described as a landing
schedule where $a_i$ are the number of balls currently scheduled
to land at time $i$.  We also have a state diagram which has as its
vertices all possible states and for edges all ways to go from one
state to another state (see \cite{polster}).  The state diagram is
thus a directed graph with two important parameters: $b$, the
number of balls that are being juggled, and $m$, the maximum
number of balls that can be caught/thrown at any one time.  The
vertex set and edge set are defined as follows:
\begin{eqnarray*}
V&=&\{\langle a_1,a_2,a_3,\ldots\rangle: a_i\in \{0,1,\ldots,m\},
{\textstyle \sum_{i}}a_i=b\},\\
E&=&\{\langle a_1,a_2,a_3,\ldots\rangle{\to}\langle b_1,b_2,b_3,\ldots\rangle :
a_i\leq b_{i-1}\mbox{ for }i=2,3,\ldots\}.
\end{eqnarray*}

Since each state will only have finitely many nonzero terms, we will
truncate the terminal zero portions of the state vectors
when convenient.   The height of a
state $\alpha$ will be the largest index $i$ for which $a_i>0$,
and will be denoted by $h(\alpha)$.  A small portion of the state diagram
when $b=3$ and $m=2$ is shown in Figure~\ref{fig:state}.

\begin{figure}[htb]
\centering
\includegraphics[scale=1.25]{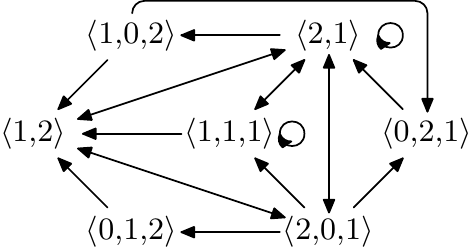}
\caption{A portion of the state diagram when $b=3$ and $m=2$.}
\label{fig:state}
\end{figure}

\subsection{A bucket approach}
To better follow the analysis (and make practical demonstrations easier)
we can reinterpret multiplex juggling by a series of buckets and balls.
The buckets will represent future landing times for $i=1,2,3,\ldots$ and
the balls are distributed among these buckets.  A state vector is then a
listing of how many balls are currently in each bucket, and $m$ is now
the maximum number of balls that can fit inside of a bucket.  Transitions
from state to state happen by having the buckets shift down by one and
redistributing any balls that were in the bottom bucket.  This process
is shown in Figure~\ref{fig:buckets}.

\begin{figure}[htb]
\centering
\includegraphics[scale=.9]{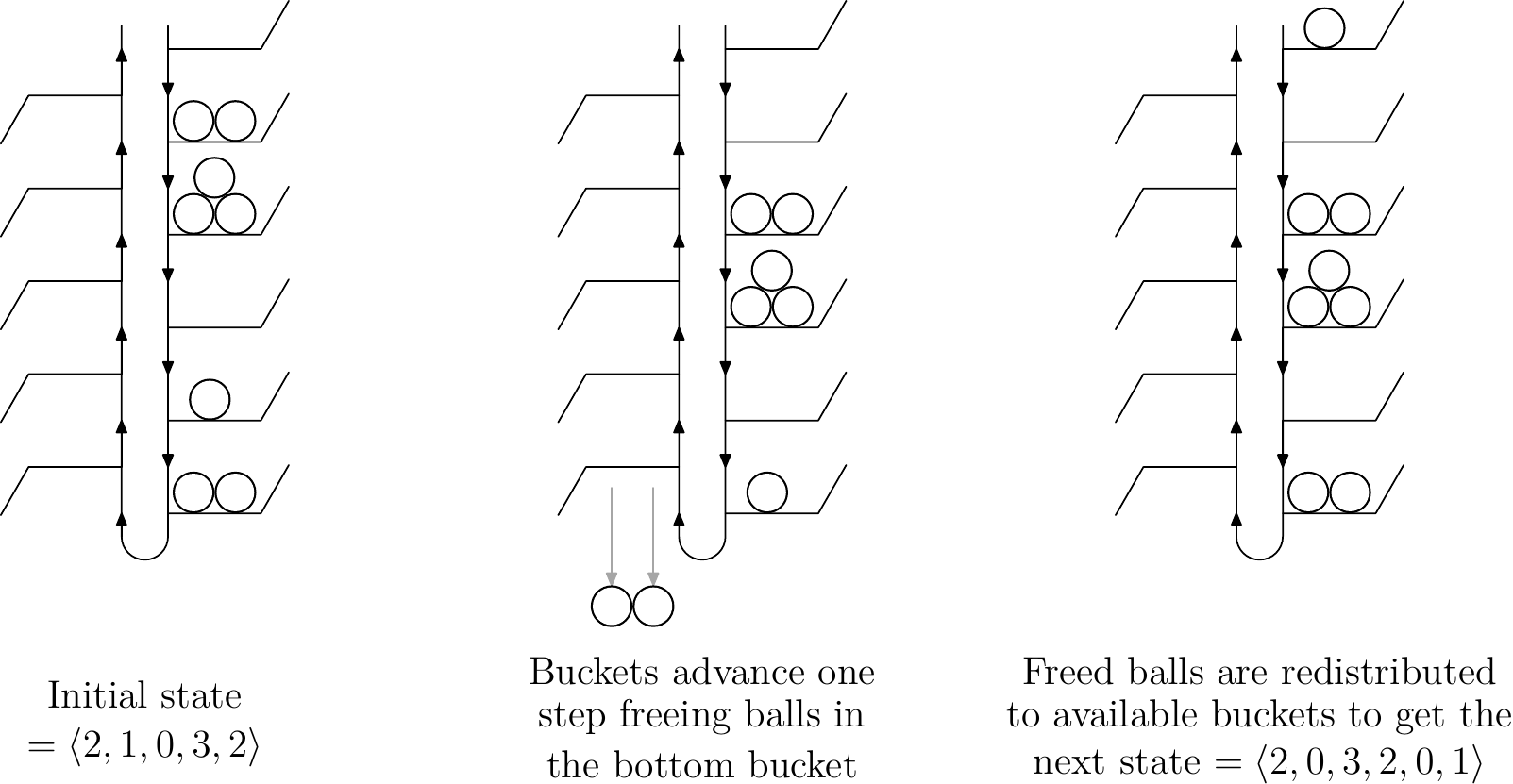}
\caption{A bucket approach to multiplex juggling.}
\label{fig:buckets}
\end{figure}

\subsection{Multiplex siteswap notation}
To describe a walk in the state diagram it suffices to know what
state we start in and how we transition from state to state.  In
transitioning from state to state the important piece of information
is what happened to the ball(s) in the bottom bucket.  This can be
described by a multi-set which lists the new location(s) of the ball(s).

We can thus describe our walk by a series of multi-sets
$(T_1,T_2,\ldots,T_n)$ such that each set has $m$ elements
(when we have fewer than $m$ balls to redistribute we will indicate
no-throws by $0$).  These sets are analogous to siteswap notation for
juggling.  In particular, it can be shown that
\[
\bigcup_{i=1}^n\{i+T_i~({\rm~mod~}n)\}=\big\{\underbrace{1,\ldots,1}_{m~{\rm times}},\underbrace{2,\ldots,2}_{m~{\rm times}},\ldots,\underbrace{n,\ldots,n}_{m~{\rm times}}\big\},~~
{\rm and}~~
{1\over n}\sum_{i=1}^n\sum_{x\in T_i}x=b.
\]

In the next section we will combine the idea of this multiplex
siteswap notation with the buckets.

\section{A matrix interpretation}
One way to use the buckets to find a sequence of length $n$ that
starts in state $\alpha=\langle a_1,a_2,\ldots\rangle$ and ends
in state $\beta=\langle b_1,b_2,\ldots\rangle$ (if one exists)
is to start with the balls arranged in the buckets as dictated
by $\alpha$.
 We then modify the capacities of the buckets so that they are
 (starting at the first bucket)
\[
\underbrace{m,m,\ldots,m}_{n~{\rm times}},b_1,b_2,\ldots.
\]
Finally, take $n$ steps (such as shown in
Figure~\ref{fig:buckets}) being careful not to exceed the capacity
of any bucket and at the end, we will be forced into state
$\beta$. On the other hand, every possible way to start in
$\alpha$ and end in $\beta$ in $n$ steps can be done in this
modified buckets approach.

Finding all of the walks of length $n$ in the state diagram
between $\alpha$ and $\beta$ is thus equivalent to finding
all of the walks that can be run using this modified bucket
procedure.  This is what we will actually enumerate.  We start
with the following matrix, where $h=\max\{h(\alpha)-n,h(\beta)\}$
(this is a $0$-$1$ matrix, where any unspecified entries are $0$'s).

\bigskip

\hfil \includegraphics[scale=1]{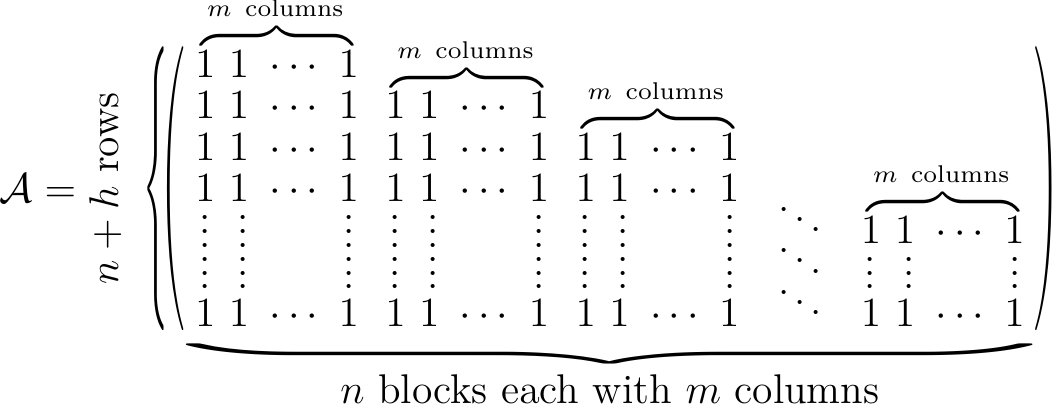} \hfil

\bigskip

Each block of $m$ columns will correspond to one transition in the
state diagram/buckets procedure.  In a block, each column corresponds
to a single element in the multi-set describing our transition.  The
first $1$ in a column corresponds to a no-throw or a throw of height
$0$, the second $1$ corresponds to a throw of height $1$, the third
$1$ to a throw of height $2$ and so on.  So we choose exactly one $1$
in each column and by reading the blocks of columns we will get the
transitions $T_i$ between states.

Each row will correspond to a bucket, and to incorporate the modified
buckets approach we specify a row sum for each row.  The row sum will
be the ``unused capacity'' of the buckets. Beginning at the first row
and going down the row sums will be
\[
m-a_1,m-a_2,\ldots,m-a_n,b_1-a_{n+1},\ldots,b_h-a_{n+h}.
\]

As an example, suppose that we wanted a walk of length $4$ that starts
and ends in the state $\langle 1,2\rangle$ in the state diagram shown
in Figure~\ref{fig:state}.  Then this corresponds to choosing one $1$
out of each column in the matrix below on the left so that the row
sums are as dictated on the side of the matrix.  \vspace{-17pt}
\[
\bordermatrix{
&\cr
\it 1&1&1\cr
\it0&1&1&1&1\cr
\it2&1&1&1&1&1&1\cr
\it2&1&1&1&1&1&1&1&1\cr
\it1&1&1&1&1&1&1&1&1\cr
\it2&1&1&1&1&1&1&1&1\cr
}
\qquad\qquad
\bordermatrix{
&\cr
\it1&\!\!\boxed{1}\!\!&1\cr
\it0&1&1&1&1\cr
\it2&1&\!\!\boxed{1}\!\!&\!\!\boxed{1}\!\!&1&1&1\cr
\it2&1&1&1&1&1&1&\!\!\boxed{1}\!\!&\!\!\boxed{1}\!\!\cr
\it1&1&1&1&\!\!\boxed{1}\!\!&1&1&1&1\cr
\it2&1&1&1&1&\!\!\boxed{1}\!\!&\!\!\boxed{1}\!\!&1&1\cr
}
\]
One possible solution is shown in the matrix on the right which corresponds to the walk
\[
\begin{array}{c@{}c@{}c@{}c@{}c@{}c@{}c@{}c@{}c}
\langle 1,2\rangle&{\to}&\langle2,1\rangle&{\to}&\langle2,0,1\rangle&{\to}&\langle0,1,2\rangle&{\to}&\langle1,2\rangle.\\
&^{\{0,2\}}&&^{\{1,3\}}&&^{\{3,3\}}&&^{\{0,0\}}&
\end{array}
\]
Here we have indicated the sets $T_i$ under each transition.

To see why this works we need to show that each multi-set given by a block of columns is a valid transition in our modified buckets procedure.  Starting with the first block, the first row  sum should be $m-\alpha_1$, this indicates that we currently have an excess capacity of $m-\alpha_1$ in the bottom bucket and so we need to make $m-\alpha_1$ no-throws, i.e., throws of height $0$, and by row sum restrictions we are forced to select exactly $m-\alpha_1$ of the $1$'s on the first row.  This accounts for $m-\alpha_1$ of the columns in the first block.  The remaining $\alpha_1$ columns must have $1$'s (i.e., balls) distributed among the rows (i.e., buckets) which still have extra capacity.  Thus the first block must give a valid transition in the procedure.  After selecting the $1$'s for the first block we then update the row sums according to our choices, remove the first block and then repeat the same process $n-1$ times.

Conversely, it is easy to check that given the transitions $T_1,T_2,\ldots,T_n$ joining $\alpha$ to $\beta$, we can find a selection of $1$'s in the matrix $\cal A$ which corresponds to this walk and satisfies the row/column sum restrictions.

Finally, since multi-sets are unordered, our choice of $1$'s in columns is unique up to permutation of the columns in a block, so we may assume that the heights of the $1$'s in a block are weakly decreasing (as shown in the example above).  We now have the following general result.

\begin{lem}\label{lem:matrix}
Suppose we are given a state diagram with capacity $m$ and states $\alpha=\langle a_1,a_2,a_3,\ldots\rangle$ and $\beta=\langle b_1,b_2,b_3,\ldots\rangle$, and $h=\max\{h(\alpha)-n,h(\beta)\}$.  Then the number of walks of length $n$ starting at $\alpha$ and ending at $\beta$ is equal to the number of ways of choosing $mn$ ones out of the matrix $\cal A$ given above such that:
\begin{itemize}
\item Each column sum is $1$.
\item The row sums are (in order from first to last)
\[
m-a_1,m-a_2,\ldots,m-a_n,b_1-a_{n+1},\ldots,b_h-a_{n+h}.
\]
\item In each block of $m$ columns the height of selected $1$'s is weakly decreasing.
\end{itemize}
\end{lem}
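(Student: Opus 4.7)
The plan is to establish a bijection between walks of length $n$ from $\alpha$ to $\beta$ and matrix selections satisfying the three stipulated conditions, using the modified buckets procedure as the intermediary. First, I would invoke the remark preceding the lemma that a walk in the state diagram from $\alpha$ to $\beta$ is equivalent to a run of the modified buckets procedure (with capacity sequence $m,m,\ldots,m,b_1,b_2,\ldots$) that does not overflow any bucket and ends forced into state $\beta$. So it suffices to biject matrix selections with valid runs of this procedure.

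Next I would describe the forward map. Given transitions $T_1,\ldots,T_n$ encoded as multi-sets of throw-heights, sort each $T_i$ into a weakly decreasing sequence $(t_{i,1} \geq t_{i,2} \geq \cdots \geq t_{i,m})$; in the $j$th column of the $i$th block, select the $1$ lying in row $(\text{position of bottom bucket at step }i) + t_{i,j}$. The reverse map reads the selected row positions off each block and recovers the multi-set. The weakly-decreasing condition in the lemma exactly records the canonical choice of representative for the unordered multi-set, and the shift in starting row per block is already baked into the staircase shape of $\cal A$.

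The core verification is that the row sum condition is equivalent to the non-overflow requirement together with the terminal state being $\beta$. I would argue this row by row. For row $k$ with $1 \le k \le n$, the row sum counts the total number of times row $k$ is selected across all blocks; since each such selection corresponds to a ball landing in bucket $k$ (as time is shifted by the staircase), the constraint that this equals $m-a_k$ is exactly the statement that bucket $k$ never exceeds capacity $m$ given its initial load $a_k$. For row $n+j$ with $1 \le j \le h$, the analogous sum $b_j - a_{n+j}$ forces the final occupancy of bucket $j$ after $n$ shifts to equal $b_j$, i.e., the end state is $\beta$. The column sum of $1$ is automatic because each throw places exactly one ball. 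Finally, the total count of selected ones must be $mn$, which is consistent with both: $m$ balls redistributed per step over $n$ steps, and also the sum of the prescribed row sums (a short arithmetic check using $\sum a_i = \sum b_i = b$).

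The main obstacle is bookkeeping: one must match up the indexing of rows in $\cal A$ with the (shifting) labels of buckets at each time step, and argue that ``height $j$ throw at step $i$'' corresponds precisely to a $1$ in the appropriate diagonal position of the $i$th block. Once this correspondence is made explicit, the three listed conditions translate term by term into (i) each throw places one ball, (ii) buckets respect their capacities and the run terminates at $\beta$, and (iii) multi-sets are counted once rather than $m!$ times, completing the bijection and hence the proof.
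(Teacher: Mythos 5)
Your proposal is correct and follows essentially the same route as the paper: a bijection between matrix selections and runs of the modified buckets procedure, with the weakly decreasing condition serving as the canonical representative of each unordered multi-set, and the row sums encoding unused bucket capacity. The only cosmetic difference is that the paper verifies validity by peeling off the first block and updating the row sums iteratively, whereas you check the row-sum conditions row by row; both amount to the same bookkeeping.
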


\begin{REM}
If for some $i$, $b_i<a_{n+i}$, then one of the row sums will be negative which is impossible and thus we have no solutions for the selection of $1$'s. At the same time it is easy to see that there can be no walks in the state diagram of length $n$ joining $\alpha$ and $\beta$ by comparing their landing schedules.
\end{REM}

\begin{REM}
When $m=1$, if we ignore rows with row sum $0$, then all the rows
and column sums will be $1$.  In this case we can count the number
of walks joining two states by calculating the permanent of a
matrix.  This is similar to the approach taken by Chung and Graham
\cite{cg1}.
\end{REM}

\section{Filling the matrix}
We now count the number of ways to fill the matrix $\cal A$ according to the restrictions in Lemma~\ref{lem:matrix}.  We will demonstrate the procedure by working through an example, namely counting the number of periodic multiplex juggling sequences of length $n$ that start and end in state $\langle 3\rangle$ when $m=3$.

The first thing to observe is that when the height of $\alpha$ is
small compared to $n$ then the row sums have the following form:
\[
{{\rm initial}\atop{\rm noise}}~~\bigg|~~{{\rm string}\atop{\rm of~}m{\rm \mbox{'s}}}~~\bigg|~~{{\rm terminal}\atop{\rm noise}}
\]
We can form a recurrence based on this where we fill in the last block and then reduce it to the case when there is one fewer $m$ in the middle.

Without loss of generality we can assume that our noise at the end is a
partition of the $b$ balls so that no part is larger than $m$, i.e., we can ignore any rows with row sum $0$ and by a simple correspondence we can assume that the row sums are weakly decreasing.

For each partition $\gamma$ of $b$ with each part at most $m$, let
$x_\gamma(k)$ be the number of ways to fill the matrix ${\cal A}$
where the row sums are given by
\[
m-a_1,\ldots,m-a_{h(\alpha)},\underbrace{m,\ldots,m}_{k~{\rm times}},\gamma.
\]
In our example, there are $3$ such partitions, $3=2+1=1+1+1$, and so we will have the
 three functions $x_{3}(k)$, $x_{2,1}(k)$ and $x_{1,1,1}(k)$.

We now form a system of linear recurrences,
$x_\gamma(k)=\sum_{\delta}a_{\gamma,\delta}x_{\delta}(k-1)$, by
examining the different ways to fill in the last block of columns.
Note that the row sums corresponding to the last block are
$m,\gamma$.  The $m$ comes from the no-throws and after we have
filled in the last block it will become incorporated into the new
terminal distribution.  Thus at each stage we will decrease
the number of middle $m$'s by $1$.

For example, if we are considering $x_{2,1}$ then we have the following $6$ different ways to fill in the last block satisfying the conditions of Lemma~\ref{lem:matrix}:
\[
{\begin{array}{c@{\,\,}|@{\,\,}c@{\,\,\,}c@{\,\,\,}c}
\it 3&\!\!\boxed{1}\!\!&\!\!\boxed{1}\!\!&\!\!\boxed{1}\!\!\\
\it 2&1&1&1\\
\it 1&1&1&1
\end{array}
\atop {\to \it 2,1}}
\quad
{\begin{array}{c@{\,\,}|@{\,\,}c@{\,\,\,}c@{\,\,\,}c}
\it 3&\!\!\boxed{1}\!\!&\!\!\boxed{1}\!\!&1\\
\it 2&1&1&\!\!\boxed{1}\!\!\\
\it 1&1&1&1
\end{array}
\atop {\to \it 1,1,1}}
\quad
{\begin{array}{c@{\,\,}|@{\,\,}c@{\,\,\,}c@{\,\,\,}c}
\it 3&\!\!\boxed{1}\!\!&\!\!\boxed{1}\!\!&1\\
\it 2&1&1&1\\
\it 1&1&1&\!\!\boxed{1}\!\!
\end{array}
\atop {\to \it 2,1}}
\quad
{\begin{array}{c@{\,\,}|@{\,\,}c@{\,\,\,}c@{\,\,\,}c}
\it 3&\!\!\boxed{1}\!\!&1&1\\
\it 2&1&\!\!\boxed{1}\!\!&\!\!\boxed{1}\!\!\\
\it 1&1&1&1
\end{array}
\atop {\to \it 2,1}}
\quad
{\begin{array}{c@{\,\,}|@{\,\,}c@{\,\,\,}c@{\,\,\,}c}
\it 3&\!\!\boxed{1}\!\!&1&1\\
\it 2&1&\!\!\boxed{1}\!\!&1\\
\it 1&1&1&\!\!\boxed{1}\!\!
\end{array}
\atop {\to \it 2,1}}
\quad
{\begin{array}{c@{\,\,}|@{\,\,}c@{\,\,\,}c@{\,\,\,}c}
\it 3&1&1&1\\
\it 2&\!\!\boxed{1}\!\!&\!\!\boxed{1}\!\!&1\\
\it 1&1&1&\!\!\boxed{1}\!\!
\end{array}
\atop {\to \it 3}}
\]
By looking at the new terminal distributions it follows that
\[
x_{2,1}(k)=x_3(k-1)+4x_{2,1}(k-1)+x_{1,1,1}(k-1).
\]
Similar analysis shows that
\begin{eqnarray*}
x_{3}(k)&=&2x_{3}(k-1)+2x_{2,1}(k-1)\qquad\mbox{and}\\
x_{1,1,1}(k)&=&x_3(k-1)+3x_{2,1}(k-1)+4x_{1,1,1}(k-1).
\end{eqnarray*}

We can rewrite this in matrix form as
\begin{equation}\label{eq1}
\left(\begin{array}{c}x_3(k)\\x_{2,1}(k)\\x_{1,1,1}(k)\\\end{array}\right)=
\left(\begin{array}{ccc}2&2&0\\1&4&1\\1&3&4\end{array}\right)
\left(\begin{array}{c}x_3(k-1)\\x_{2,1}(k-1)\\x_{1,1,1}(k-1)\\\end{array}\right).
\end{equation}
In general, we will have that
$\big(x_\gamma(k)\big)=A\big(x_\gamma(k-1)\big)$ where
$A_{\gamma,\delta}=a_{\gamma,\delta}$ as given above. Note that the
matrix $A$ will be independent of our choice of $\alpha$ and
$\beta$, and depends only on $b$ and $m$. Varying $\alpha$ will
change the initial conditions and varying $\beta$ will change which
of the $x_\gamma$ we are interested in for our recurrence.

For our problem, we are interested in $x_3$ as our terminal
state is $\langle3\rangle$ (i.e., corresponding to the partition $3$).
We want to transform our first-order system of  linear recurrences
into a single recurrence for $x_3$.
 Manipulating the recurrences in \eqref{eq1} it can be shown that
\begin{equation}\label{recurs}
x_3(k+3)=10x_3(k+2)-27x_3(k+1)+20x_3(k).
\end{equation}
This gives a recurrence for the number of periodic juggling sequence
of length $k$ for $k$ sufficiently large (i.e., we have shifted our
sequences $x_\gamma(k)$ past the initial noise; when counting
periodic juggling sequences we need to remember to account for this
shift).

\begin {REM}
The characteristic polynomial of the matrix in \eqref{eq1} is $x^3-10x^2+27x-20$,
the same coefficients as in \eqref{recurs}.  This is a consequence of the
Cayley-Hamilton Theorem.  Namely if the characteristic polynomial of the
matrix $A$ is $x^r+q_1x^{r-1}+\cdots+q_r$ then
\begin{eqnarray*}
\big(0\big)&=&\big(A^r+q_1A^{r-1}+\cdots+q_r I\big)\big(x_\gamma(k)\big)\\
&=&A^r\big(x_\gamma(k)\big)+q_1A^{r-1}\big(x_\gamma(k)\big)+\cdots+q_r\big(x_\gamma(k)\big)\\
&=&\big(x_\gamma(k+r)\big)+q_1\big(x_\gamma(k+(r-1))\big)+\cdots+q_r\big(x_\gamma(k)\big)\\
&=&\big(x_\gamma(k+r)+q_1x_\gamma(k+(r-1))+\cdots+q_rx_\gamma(k)\big).
%\left(\begin{array}{c}0\\ \vdots\\0\end{array}\right)&=&
%\big(A^r+q_1A^{r-1}+\cdots+q_r I\big)
%\left(\begin{array}{c}x_{\gamma_1}(k)\\\vdots\\x_{\gamma_r}(k)\end{array}\right)\\
%&=&A^r\left(\begin{array}{c}x_{\gamma_1}(k)\\\vdots\\x_{\gamma_r}(k)\end{array}\right)+q_1A^{r-1}\left(\begin{array}{c}x_{\gamma_1}(k)\\\vdots\\x_{\gamma_r}(k)\end{array}\right)+\cdots+q_r \left(\begin{array}{c}x_{\gamma_1}(k)\\\vdots\\x_{\gamma_r}(k)\end{array}\right)
%\\
%&=&\left(\begin{array}{c}x_{\gamma_1}(k+r)\\\vdots\\x_{\gamma_r}(k+r)\end{array}\right)+q_1\left(\begin{array}{c}x_{\gamma_1}(k+(r-1))\\\vdots\\x_{\gamma_r}(k+(r-1))\end{array}\right)+\cdots+q_r\left(\begin{array}{c}x_{\gamma_1}(k)\\\vdots\\x_{\gamma_r}(k)\end{array}\right)\\
%&=&\left(\begin{array}{c}x_{\gamma_1}(k+r)+q_1x_{\gamma_1}(k+(r-1))+\cdots+q_rx_{\gamma_1}(k)\\\vdots\\x_{\gamma_r}(k+r)+q_1x_{\gamma_r}(k+(r-1))+\cdots+q_rx_{\gamma_r}(k)\end{array}\right)
\end{eqnarray*}
The recursion that the characteristic polynomial of $A$ gives is
universal for a fixed $b$ and $m$ in the following sense. Let $a(n)$
be the number of walks of length $n$ joining state $\alpha$ to state
$\beta$. Then for $n$ sufficiently large, $a(n)$ satisfies the
recursion given by the characteristic polynomial of $A$ (which is
independent of $\alpha$ and $\beta$).
\end{REM}

It remains to calculate enough initial terms to begin the
recursion.  Using Lemma~\ref{lem:matrix} this can be handled by brute force or some careful
case analysis.  In general, we will need to calculate the first
$h(\alpha)+r-1$ terms where $r$ is the number of partitions of $b$
with each part at most $m$.  The first $h(\alpha)-1$ terms are to
handle the initial noise caused by $\alpha$ and the next $r$ terms
are to help start the recursion.  A calculation shows that the
sequence counting the number of periodic juggling sequences that
start and end at $\langle 3 \rangle$ starts $1,4,20$. Now applying
the recursion we get the sequence
\[
1,4,20,112,660,3976,24180,147648,903140,\ldots.
\]
With the recursion and the initial values it is then a simple matter to
derive a generating function for the number of juggling sequences of
period $n$.  The generating function for this series and several others
are given in Table~1.

\begin{table}[htb]\label{table:per}
\centering
\begin{tabular}{|c|c|l|c|}\hline
{State}&$m$&{\hfil Initial~terms \hfil}&{Generating~Function}\\
\hline \hline $\langle 2 \rangle$& $2$&
$1,3,10,35,125,450,1625,5875,\ldots$&${\displaystyle x-2x^2\over
\displaystyle1-5x+5x^2}$ \\ \hline $\langle 1,1 \rangle$& $2$&
$1,3,11,40,145,525,1900,6875,\ldots$&${\displaystyle
x-2x^2+x^3\over \displaystyle1-5x+5x^2}$ \\ \hline $\langle 2,1
\rangle$& $2$& $1,4,22,124,706,4036,23110,132412,\ldots    $&$
{\displaystyle x-4x^2+3x^3\over \displaystyle1-8x+13x^2}$ \\
\hline $\langle 1,1,1 \rangle$& $2$&
$1,3,18,105,606,3483,19986,114609,\ldots    $&$ {\displaystyle
x-5x^2+7x^3\over \displaystyle1-8x+13x^2}$ \\ \hline $\langle 2,2
\rangle$& $2$& $1,3,21,162,1305,10719,88830,739179,\ldots $&$
{\displaystyle
x-11x^2+33x^3-27x^4\over \displaystyle1-14x+54x^2-57x^3}$ \\
\hline $\langle 3 \rangle$& $3$& $1,4,20,112,660,3976,24180,147648
,\ldots    $&$ {\displaystyle x-6x^2+7x^3\over \displaystyle
1-10x+27x^2-20x^3}$ \\ \hline $\langle 2,1 \rangle$& $3$&
$1,5,30,182,1110,6786,41530,254278,\ldots    $&$ {\displaystyle x
- 5x^2 + 7x^3 - 3x^4\over \displaystyle 1-10x+27x^2-20x^3}$ \\
\hline
\end{tabular}
\caption{Number of periodic sequences of length $n$ for some starting states.}
\end{table}

\begin{REM}
Instead of calculating the first $h(\alpha)+r-1$ terms we can instead calculate the first $h(\alpha)-1$ terms and then compute the $r$ values for the $x_\gamma(0)$.  In at least one case this is easier, namely when the state is $\langle b \rangle$.  Then it is easy to check that $x_\gamma(0)=1$ for all partitions $\gamma$ and quickly bootstrap our sequence.
\end{REM}

\subsection{Calculating recursion coefficients}
In this section we show how to quickly compute the recursion coefficients $a_{\gamma,\delta}$, which can then (by taking determinants) quickly give us the recursion relationship for the juggling sequences.

\begin{lem}
Let $\gamma$ and $\delta$ be partitions of $b$ with no part more than $m$, and suppose that $\gamma=(\gamma_1,\gamma_2,\ldots,\gamma_m)$ and $\delta=(\delta_1,\delta_2,\ldots,\delta_m)$ where $\gamma_i$ is the number of parts of the partition $\gamma$ of size $i$, similarly for $\delta_i$.  Then
\begin{equation}\label{coeff}
a_{\gamma,\delta}=\prod_{i=1}^m{(\gamma_i+\cdots+\gamma_m)+1-(\delta_{i+1}+\cdots+\delta_m)\choose \delta_i}.
\end{equation}
\end{lem}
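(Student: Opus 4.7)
The plan is to recast filling the last block of $m$ columns as choosing, column by column, a row in which to place the single $1$. Since only the multi-set of heights matters (this is exactly the weakly-decreasing normalization from Lemma~\ref{lem:matrix}), such a filling is determined by the tuple $(c_r)$ recording how many ones are placed in each row. I would index the rows of the last block in weakly decreasing order of their initial row sums: one row of sum $m$ (from the topmost middle-$m$, which is being absorbed into the new terminal partition) followed by rows whose sums are the parts of $\gamma$. After the block is filled, row $r$ has new row sum $s_r-c_r$, and requiring that the resulting terminal partition equal $\delta$ amounts to asking that the multi-set $\{s_r-c_r\}$ (with zeros dropped) equal $\delta$ as a multi-set.

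To count such tuples, I would process the possible final row sums $i=m,m-1,\ldots,1$ in decreasing order. At stage $i$, decide which $\delta_i$ rows end with final sum exactly $i$: a row qualifies iff its starting sum is at least $i$, and if chosen it is forced to receive exactly $s_r-i$ ones. The total number of rows with starting sum at least $i$ is $1+G_i$, where $G_i:=\gamma_i+\gamma_{i+1}+\cdots+\gamma_m$, the $+1$ accounting for the absorbed no-throw row. Any row already used in an earlier stage $j>i$ ended with final sum $j>i$, so it necessarily had starting sum $\ge i$; hence the number of already-used rows subtracted from the pool at stage $i$ is exactly $D_{i+1}:=\delta_{i+1}+\cdots+\delta_m$. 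Thus at stage $i$ there are $G_i+1-D_{i+1}$ available candidates, contributing a factor of $\binom{G_i+1-D_{i+1}}{\delta_i}$, and multiplying across $i=1,\ldots,m$ produces \eqref{coeff}.

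It remains to check that these stage-by-stage choices are in bijection with valid fillings. Any sequence of choices determines a final-sum assignment for every row (rows not chosen through stage $1$ are forced to end at $0$); setting $c_r=s_r-(\text{final sum of }r)$ recovers the filling, and the quick total-count verification
\[
\sum_r c_r = \sum_r s_r - \sum_r(\text{final sum of }r) = (m+b) - b = m
\]
confirms that the column-sum condition is automatically satisfied. Conversely, any valid filling determines the final sums of all rows uniquely, hence the stage-by-stage choices. The only bookkeeping subtlety I anticipate is the $+1$ in $G_i+1-D_{i+1}$: the absorbed no-throw row must remain a candidate at every level $i\le m$, and forgetting this would produce a formula off by one in each factor. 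With that bookkeeping straight, the product formula is immediate.
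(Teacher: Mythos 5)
Your proposal is correct and is essentially the paper's own argument: your stage-by-stage assignment of final row sums $i=m,m-1,\ldots,1$ to the rows of sums $m,\gamma$ is exactly the paper's ``insertion'' of the parts of $\delta$ (largest first) into the augmented partition $\gamma'=(\gamma_1,\ldots,\gamma_{m-1},\gamma_m+1)$, yielding the same pool sizes $(\gamma_i+\cdots+\gamma_m)+1-(\delta_{i+1}+\cdots+\delta_m)$ and the same product of binomial coefficients.
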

\begin{proof}
To see \eqref{coeff} recall that when filling in the last set of columns of the matrix there is the partition $\gamma$ and an additional row with row sum $m$.  We then select $m$ ones such that (1) the height of the ones are weakly decreasing and (2) no row sum is violated.

This process can be made equivalent to taking the partition $\gamma$ and adding one part of size $m$ (to form a new partition $\gamma'=(\gamma_1,\ldots,\gamma_{m-1},\gamma_m+1)$) then reducing by a total of $m$ some part(s) of $\gamma'$.

The coefficient $a_{\gamma,\delta}$ is then the total number of ways that reducing some part(s) of $\gamma'$ by a total of $m$ will result in the partition $\delta$.  We can however work backwards, namely if we want a desired partition from our reduction we can ``insert'' the desired partition into $\gamma'$ (i.e., associate each part of $\delta$ with some part of $\gamma'$ which is at least as large) and then finding the difference between the insertion and $\gamma'$ (which difference will sum to $m$)  gives the reduction to use.

Thus $a_{\gamma,\delta}$ is the total number of ways that we can insert $\delta$ into $\gamma'$.  We now enumerate by inserting the partition $\delta$ backwards, namely we insert the largest parts first and then work down.  First note that here are $\delta_m$ parts of size $m$ to insert and they can be positioned into any of the $\gamma_m+1$ parts of size $m$, which can be done in
\[
{\gamma_m+1\choose\delta_m}
\]
ways.  There are then $\delta_{m-1}$ parts of size $m-1$ to insert and they can be positioned in any of the $\gamma_{m-1}+\gamma_m+1-\delta_m$ parts of size at least $m-1$ that have not yet been used, which can be done in
\[
{\gamma_{m-1}+\gamma_m+1-\delta_m\choose\delta_{m-1}}
\]
ways.  This process then continues, so that in general there will be $\delta_i$ parts of size $i$ to insert and they can be positioned in any of the $(\gamma_i+\cdots+\gamma_m)+1-(\delta_{i+1}+\cdots+\delta_m)$ parts of size at least $i$ that have not yet been used, which can be done in
\[
{(\gamma_{i}+\cdots+\gamma_m)+1-(\delta_{i+1}+\cdots\delta_m)\choose\delta_{i}}
\]
ways.  Putting it all together then gives the result.
\end{proof}

As a check for, suppose $m=2$, $\gamma=(a,b)$ and $\delta=(a-2c,b+c)$, then \eqref{coeff} gives
\[
a_{\gamma,\delta}={(a+b)+1-(b+c)\choose a-2c}{b+1\choose b+c}=
{a+1-c\choose a-2c}{b+1\choose b+c}.
\]
To get a nonzero coefficient we must have $b+1\geq b+c$ so that $c\leq 1$ and $a+1-c\geq a-2c$ so that $c\geq -1$.  Putting these in and simplifying we are left with
\[
a_{\gamma,\delta}=\left\{\begin{array}{c@{\qquad}l}\vspace{3pt}
{1\over2}a(a-1)&\mbox{if }c=1;\\\vspace{3pt}
(a+1)(b+1)&\mbox{if }c=0;\\\vspace{3pt}
{1\over2}(b+1)b&\mbox{if }c=-1;\\\vspace{3pt}
0&\mbox{otherwise.}
\end{array}\right.
\]
This can easily be verified by hand.

\section{Remarks}

\subsection{Primitive juggling sequences}
We have demonstrated a way to count the number of periodic
juggling sequences of length $n$ which start and end in a given state $\sigma$.
A related problem is counting the number of {\it primitive}\/
periodic juggling sequences.  These are special periodic juggling
sequences with the extra condition that the walk does not return
to $\sigma$ until the $n$th step.  This can be done by the
following observation of Chung and Graham \cite{cg1} (see also
\cite{gessel, concrete}): If $a(n)$ is the number of
periodic juggling sequences of length $n$ that start and end at
$\sigma$ and $F(x)=\sum_{n\geq 1} a(n)x^n$, then $b(n)$ counts the
number of primitive periodic juggling sequences of length $n$ that
start and end at $\sigma$ where
\[
\sum_{n\geq 1}b(n)x^n={F(x)\over 1+F(x)}.
\]

Applying this to the data in Table~1 we get the information about
primitive juggling sequences given in Table~2. Only one of the sequences
shown in Tables~1 or 2 was previously listed in \cite{sloane}.

\begin{table}[htb]\label{table:pri}
\centering
\begin{tabular}{|c|c|l|c|}\hline
{State}&$m$&{\hfil Initial~terms \hfil}&{Generating~Function}\\
\hline \hline $\langle 2 \rangle$& $2$&
$1,2,5,14,41,122,365,1094,\ldots$&${\displaystyle x-2x^2\over
\displaystyle1-4x+3x^2}$ \\ \hline $\langle 1,1 \rangle$& $2$&
$1,2,6,17,48,135,379,1063,\ldots$&${\displaystyle x-2x^2+x^3\over
\displaystyle 1-4x+3x^2+x^3}$ \\ \hline $\langle 2,1 \rangle$&
$2$& $1,3,15,75,381,1947,9975,\ldots    $&$ {\displaystyle
x-4x^2+3x^3\over \displaystyle1-7x+9x^2+3x^3}$ \\ \hline $\langle
1,1,1 \rangle$& $2$& $1,2,13,68,358,1871,9757,\ldots    $&$
{\displaystyle x-5x^2+7x^3\over \displaystyle1-7x+8x^2+7x^3}$
\\ \hline $\langle 2,2 \rangle$& $2$&
$1,2,16,119,934,7463,60145,\ldots    $&$ {\displaystyle
x-11x^2+33x^3-27x^4\over \displaystyle1-13x+43x^2-24x^3-27x^4}$ \\
\hline $\langle 3 \rangle$& $3$& $1,3,13,67,369,2083,11869,\ldots
$&$ {\displaystyle x-6x^2+7x^3\over \displaystyle
1-9x+21x^2-13x^3}$
\\ \hline $\langle 2,1 \rangle$& $3$&
$1,4,21,111,592,3171,17021,\ldots    $&$ {\displaystyle x - 5x^2 +
7x^3 - 3x^4\over \displaystyle 1 - 9x + 22x^2 - 13x^3 - 3x^4}$ \\
\hline
\end{tabular}
\caption{Number of primitive sequences of length $n$ for some starting states.}
\end{table}

A related open problem is to find the number of prime juggling
sequences which start and end in a given state $\sigma$.  A prime juggling
sequence corresponds to a simple cycle in the state diagram, i.e., it never visits any vertex more
than once. Note that for a primitive juggling sequence we are
allowed to visit vertices other than $\sigma$ as often as we want.

\subsection{Further directions}
One implicit assumption that we have made is that the height the
balls can be thrown to in the juggling sequence is essentially
limited only by the period. This might be unrealistic when trying to
implement the procedure for an actual juggler.  In this case we
would like to add an additional parameter which is the maximum
height a ball can be thrown.  While it is not difficult to adopt the
matrix $\cal A$ to handle this additional constraint, our recursion
method will no longer work.  For this setting, the simplest method
might be to find the adjacency matrix of the (now finite) state
diagram and take powers to calculate the number of walks.

Another implicit assumption that we have made is that the balls are identical,
it is easy to imagine that the balls are distinct and then we can ask given
an initial placement of balls and a final placement of balls how many walks
in the state diagram are there.  This problem is beyond the scope of the methods given here.  However, Stadler \cite{stadlet} has had some success in this direction (using different methods than the ones presented here), he was able to derive an expression involving Kostka numbers enumerating the number of such sequences, as well as several other related sequences.

It would be interesting to know for each $m$, $n$ and $b$, which
states $\sigma$ have the largest number of (primitive)
$\sigma$-juggling sequences of length $n$. When $m = 1$, then it
would seem that the so-called ground state $\langle 1,1,\ldots,1 \rangle$ does.
However, for larger values of $m$, it is not so clear what to guess.

%It would be interesting to find a fast way to generate the linear
%recurrences
%$x_\gamma(n)=\sum_{\delta}a_{\gamma,\delta}x_{\delta}(n-1)$.  In
%particular, is there some simple combinatorial interpretation of the
%$a_{\gamma,\delta}$ that would speed up the calculation of the
%recurrences?

As can be seen there are still many interesting open problems
concerning the enumeration of multiplex juggling sequences.

\end{document}